\newtheorem{theorem}{Theorem}
\newtheorem{lemma}[theorem]{Lemma}
\newtheorem{corollary}[theorem]{Corollary}
\newtheorem{conjecture}[theorem]{Conjecture}
\theoremstyle{definition}
\newcommand{\beg}{\mathop{{\rm beg}}}
\newcommand{\inv}{\mathop{{\rm inv}}}
\newcommand{\p}{\mathrm \wp}
\newcommand{\V}{\mathrm V}
\def\N#1#2{{\bf N}_{{#1}}({{#2}})}
\def\Zent#1{{\bf Z}({#1})}
\newcommand{\ZZ}{\mathbb Z}
\newcommand{\CT}{\mathrm{CT}}
\newcommand{\Aut}{\mathrm{Aut}}
\newcommand{\Sym}{\mathrm{Sym}}
\newcommand{\Tree}{{\mathcal{T}}}
\newcommand{\cH}{{\mathcal{H}}}
\newcommand{\cG}{{\mathcal{G}}}
\newcommand{\cN}{{\mathcal{N}}}
\newcommand{\cP}{{\mathcal{P}}}
\newcommand{\cM}{{\mathcal{M}}}
\newcommand{\Ga}{{\Gamma}}
\newcommand{\tGa}{\tilde{\Gamma}}
\newcommand{\tG}{{\tilde{G}}}
\newcommand{\tM}{{\tilde{M}}}
\newcommand{\tg}{{\tilde{g}}}
\renewcommand{\H}{{\rm H}}
\begin{document}

\title{Lifting a prescribed group of automorphisms  of graphs}

\author[P. Poto\v{c}nik]{Primo\v{z} Poto\v{c}nik}
\address{Primo\v{z} Poto\v{c}nik,\newline
 Faculty of Mathematics and Physics,
 University of Ljubljana \newline 
Jadranska 19, 1000 Ljubljana, Slovenia}\email{primoz.potocnik@fmf.uni-lj.si}

\author[P. Spiga]{Pablo Spiga}
\address{Pablo Spiga, Dipartimento di Matematica Pura e Applicata,\newline
University of Milano-Bicocca,
Via Cozzi~55, 20126 Milano Italy}  \email{pablo.spiga@unimib.it}

\subjclass[2010]{20B25, 05C20, 05C25}
\keywords{group, graph, cover, symmetry}

\begin{abstract}
In this paper we are interested in lifting a prescribed group of automorphisms of a finite graph via regular covering projections. Here we describe with an example the problems we address and refer to the introductory section for the correct statements of our results.

Let $P$ be the Petersen graph, say, and let $\wp:\tilde{P}\to P$ be a regular covering projection. With the current covering machinery, it is straightforward to find $\wp$ with the property that every subgroup of $\Aut(P)$ lifts via $\wp$. However, for constructing peculiar examples and in applications, this is usually not enough. Sometimes it is important, given a subgroup $G$ of $\Aut(P)$, to find $\wp$ along which $G$ lifts but no further automorphism of $P$ does. For instance, in this concrete example, it is interesting to find a covering of the Petersen graph lifting the alternating group $A_5$ but not the whole symmetric group $S_5$. (Recall that $\Aut(P)\cong S_5$.) Some other time it is important, given a subgroup $G$ of $\Aut(P)$, to find $\wp$ with the property that $\Aut(\tilde{P})$ is the lift of $G$. Typically, it is desirable to find $\wp$ satisfying both conditions. In a very broad sense, this might remind wallpaper patterns on surfaces: the group of symmetries of the dodecahedron is $S_5$, and there is a nice colouring of the dodecahedron (found also by Escher) whose group of symmetries is just $A_5$.

In this paper, we address this problem in full generality.
\end{abstract}

\maketitle

\section{Introduction}

Covering projections of graphs and lifting automorphisms along them is a classical tool in algebraic graph theory that goes back to
Djokovi\'c and his proof of the infinitude of cubic $5$-arc-transitive graphs~\cite{Dj1}. Moreover, several theoretical aspects of
lifting graph automorphisms along covering projections, together with their remarkable applications, are considered in a number of papers, for example in \cite{CM,MNS,elabcov,Sir}, to name a few of the most notable ones.

One of many applications of lifting automorphisms along
covering projections  is the construction of graphs with a prescribed type of symmetry. For example,
covering techniques are used  to find new peculiar examples of semisymmetric
graphs (see \cite{wang,MK,WanChe,ZF}),
 half-arc-transitive 
 graphs (see \cite{CPS,CZ,RS}), and arc-regular graphs (see \cite{CF,feng2,GaSp}), to name a few.

In these papers, a typical strategy is to start with a graph $\Gamma$ and a group $G\le \Aut(\Gamma)$ with a prescribed type of action on 
$\Gamma$
(such as  edge-transitive, vertex-transitive, $s$-arc-transitive for $s\ge 1$, locally primitive etc.) and then trying to find
regular covering projections $\wp:\tGa \to \Gamma$ along which $G$ lifts.
In many
applications, it is desirable for the lift to have the following two additional properties:
\begin{itemize}
\item[(1)] $G$ is the \textit{largest} subgroup of $\Aut(\Gamma)$ that lifts along $\wp$;
\item[(2)] \textit{every} automorphism of $\tGa$ projects along $\wp$.
\end{itemize}
If both these requirements are fulfilled, then $\Aut(\tGa)$ is precisely the lift of $G$.

The problem of finding regular covering projections satisfying (1) has been addressed in an ad-hoc way for some fixed
pairs ($\Gamma,G)$ (see, for example,~\cite{MK,WanChe,ZF}) and determining conditions under which
a covering projection satisfies (2) was considered by several authors in the very specific context of canonical double covers (see~\cite{Sur,Wil}). There have been some attempts to determine covering projections satisfying simultaneously (1) and (2), but again only for a small number of very specific pairs $(\Gamma,G)$ (see, for example,~\cite{CM,Ma,spiga2}).

The aim of this paper is 
to address the problem of existence of a regular covering projection satisfying (1) and (2)
 for arbitrary pairs $(\Gamma,G)$.

In Theorem~\ref{the:main} we prove that, if $\Aut(\Gamma)$ acts faithfully on the integral cycle space $\H_1(\Gamma;\mathbb{Z})$, then
a regular covering projection onto $\Gamma$ fulfilling~(1) always exists; see Section~\ref{BM} for notation and terminology. The condition of $\Aut(\Gamma)$ acting faithfully on $\H_1(\Gamma;\mathbb{Z})$ is very mild: in most interesting cases $\Aut(\Gamma)$ does act faithfully on $\H_1(\Gamma;\mathbb{Z})$, see Lemma~\ref{lemma11} and Corollary~\ref{cor:3ec}. Moreover, there are  examples where $\Aut(\Gamma)$ does not act faithfully on $\H_1(\Gamma;\mathbb{Z})$  and where a regular covering projection as in~(1) does not exist: the easiest example is when $\Gamma$ is a cycle and $G$ is the transitive cyclic subgroup of $\Aut(\Gamma)$.

In Theorem~\ref{the:cor} we prove that, if $\Aut(\Gamma)$ acts faithfully on $\H_1(\Gamma;\mathbb{Z})$ and $(\Gamma,G)$ 
satisfies an additional condition, then there exists a regular covering projection onto $\Gamma$ satisfying~(1) and~(2). The extra condition on $(\Gamma,G)$ is slightly technical and requires some notation and terminology, thus we refer the reader to Section~\ref{sec:Cor} for its definition and details. Here we simply observe that the condition is satisfied by many interesting  classes: for instance,
 when $G$ acts transitively on the $2$-arcs of $\Gamma$, or
when $G$ acts transitively on the arcs of $\Gamma$ and the valency of $\Gamma$  is  prime. 

In Conjecture~\ref{conj}, we dare to conjecture that the additional requirements we put on $\Gamma$ and $G$ in Theorem~\ref{the:cor}
are not needed, that is, 
a regular covering projection satisfying both (1) and (2) exists whenever 
 $\Aut(\Gamma)$ acts faithfully on the integral cycle space of $\Gamma$.

Both Theorems~\ref{the:main} and~\ref{the:cor} do apply to graphs that are not necessarily simple and we refer to Subsection~\ref{subsection}  for the precise definition of graph in our paper.

We conclude this introductory section giving some applications. Theorem~\ref{the:cor} and Corollary~\ref{cor:final} reprove, in a unified way, a number of results that have been proved in the past using methods specific to the families of graphs under consideration. For example, one of the consequences of Corollary~\ref{cor:final} is a solution to three problems posed by Djokovi\'c and Miller~\cite[Problems 2, 3 and 4]{DjM}
about the existence of finite cubic arc-transitive graphs with a prescribed type of the full automorphism group,
 which were first solved in~\cite{ConLor} by an ad-hoc construction. Furthermore, assuming the correctness of Conjecture~\ref{conj} one can answer a 2001 question of Maru\v{s}i\v{c} and Nedela~\cite[Problem 7.7]{MarNed} about the existence of tetravalent half-arc-transitive graphs of any given possible type, and a number
of similar other problems, such as the one of existence of graphs of every possible arc-type, see~\cite{Nem,CPZ}.

\section{Background material and notation}\label{BM}

When working with covering projections of graphs, it proves useful to allow graphs so have multiple edges, loops and
 semiedges: to avoid unnecessary complications, semiedges will be prohibited in this paper. 
 We will thus introduce the definitions and notations pertaining to graphs as defined in~\cite{MNS}, see also~\cite{elabcov} for a more succinct overview. In what follows, we provide only a brief
account.

\subsection{Graph, fundamental group and integral cycle space}\label{subsection}A {\em graph} is an ordered $4$-tuple $\Gamma=(D,V; \beg,\inv)$ where
$D$ and $V$ are disjoint non-empty finite sets of {\em darts}
and {\em vertices}, respectively, $\beg: D \to V$ is a mapping
which assigns to each dart $x$ its {\em tail}
$\beg(x)$, and $\inv: D \to D$ is an involutory permutation which interchanges
every dart $x$ with its {\em inverse dart}, denoted by $x^{-1}$.
The vertex $\beg(x^{-1})$ is then called the {\em head} of the dart $x$.
To avoid degeneracies, we assume that $\inv$ has no fixed points, that is $x^{-1} \not = x$ for every dart $x$.
In the language of \cite{MNS} this means that we only consider graphs without {\em semiedges}. To some extent this hypothesis is not really needed, however it makes the statements of our main results neater and the proofs uniform without subdivision into cases.

An {\em edge} underlying a dart $x$ is an unordered pair $\{x,x^{-1}\}$ of mutually inverse darts,
and $\{\beg(x), \beg(x^{-1})\}$ is the set of {\em endvertices} of that edges. Two edges with the same set of
endvertices are called {\em parallel} and an edge with only one endvertex is a {\em loop}. A graph
without loops and parallel edges is {\em simple} and the usual terminology about simple graphs applies in this case.

The neighbourhood of a vertex $v$ of $\Gamma$, denoted $\Gamma(v)$,
is the set of darts having $v$ as its tail, and the cardinality of $\Gamma(v)$ is called the 
{\em valency} of $v$.

A {\em walk} from a vertex $v$ to a vertex $u$ in $\Gamma$  is a sequence of darts such that 
$v$ is the tail of the first dart, $u$ is the head of the last dart and
the head of each dart in the walk coincides with the tail of the next dart in the walk.  When $v=u$, the walk is said to be {\em closed}, whereas 
the walk is {\em reduced} provided no two consecutive darts are inverse of each other.
Note that if $x$ is a dart and $\{x,x^{-1}\}$ is 
 a loop, then $(x)$ is a reduced closed walk, 
called a 
 {\em loop walk}.
 The empty sequence of darts is considered a walk and is called a trivial walk.

For a vertex $b$ of $\Gamma$ one can define the
{\em fundamental group at $b$}, denoted  $\pi(\Gamma,b)$, as the set of all closed reduced walks starting and
ending in $b$, with the operation being the concatenation (with the deletion of consecutive pairs of mutually inverse darts, if necessary). Note that $\pi(\Gamma,b)$ is a free product of infinite cyclic groups and cyclic groups of order $2$, the latter arising from semiedge walks. In particular, since we are assuming that $\Gamma$ has no semiedges, $\pi(\Gamma,b)$ is a free group.

The abelianisation $\pi(\Gamma,b)/[\pi(\Gamma,b),\pi(\Gamma,b)]$ of $\pi(\Gamma,b)$, viewed
as a $\ZZ$-module, is called the {\em first homology group} or the {\em integral cycle space}
and denoted $\H_1(\Gamma;\ZZ)$. 

For the rest of this subsection, we assume $\Gamma$ to be connected. Then, $\H_1(\Gamma;\ZZ)$ 
is independent on the choice of the vertex $b$. Moreover,
$\H_1(\Gamma;\ZZ)$ is isomorphic to $\ZZ^{m_\Gamma}$ where $m_\Gamma$ is the {\em Betti number of $\Gamma$}, that is, the number of cotree edges relative to a fixed spanning tree of $\Gamma$.
In fact, given a fixed spanning tree $\Tree$ of $\Gamma$, a $\mathbb{Z}$-basis for $\H_1(\Gamma,\ZZ)$ can be chosen
in such a way that each cotree edge $e$ corresponds to an oriented cycle in $\Gamma$ whose only cotree edge is $e$. 
This generating set is called an {\em oriented cycle basis} and does depend on the choice of $\Tree$.

Given a prime number $p$, we let $\ZZ_p$ be the finite field of order $p$. Now, the tensor product $\H_1(\Gamma;\ZZ)\otimes_{\ZZ}\mathbb{Z}_p$ will be denoted $\H_1(\Gamma;\ZZ_p)$. 
Since $\ZZ_p$ is a $(\ZZ,\ZZ_p)$ bi-module, $\H_1(\Gamma;\ZZ_p)$ can be viewed as a $\ZZ_p$-module. 
Note that 
$\H_1(\Gamma;\ZZ_p)\cong \ZZ_p^{m_\Gamma}$.
Indeed, an oriented cycle basis for $\H_1(\Gamma;\ZZ)$ gives rise to a $\mathbb{Z}_p$-basis for $\H_1(\Gamma,\ZZ_p)$ whose elements
correspond to oriented cycles of $\Gamma$.

\subsection{Graph morphism, regular covering projection, universal covering}
Let $\tilde{\Gamma}= (\tilde{D},\tilde{V}; \beg_{\tilde{\Gamma}},\inv_{\tilde{\Gamma}})$ and $\Gamma= (D,V; \beg_\Gamma,\inv_\Gamma)$ be two graphs.
A {\em morphism of graphs}, $f \colon \tilde{\Gamma} \to \Gamma$,
is a function $f \colon \tilde{V} \cup \tilde{D} \to V \cup D$
such that
$f(\tilde{V}) \subseteq V$, $f(\tilde{D}) \subseteq D$,
$f\circ \beg_{\tilde{\Gamma}} = \beg_\Gamma \circ f$ and $f \circ \inv_{\tilde{\Gamma}} = \inv_\Gamma \circ f$.
A graph morphism is an {\em epimorphism} ({\em automorphism}) if it is
a surjection (bijection, respectively).
A graph epimorphism $\wp\colon \tGa \to \Gamma$ is called a {\em covering projection} provided that it maps
the neighbourhood $\tGa(\tilde{v})$ bijectively onto the neighbourhood $\Gamma(\wp(\tilde{v}))$, for every $\tilde{v}\in \tilde{V}$.

Let $\wp\colon \tGa \to \Gamma$ be a covering projection of connected graphs, let
$g\in \Aut(\Gamma)$ and let $\tg\in\Aut(\tGa)$ be such that $\wp(x^\tg) = \wp(x)^g$ for every vertex and for every dart $x$ of $\tGa$.
Then we say that $g$ lifts along $\wp$ and that $\tg$ is a {\em lift} of $g$. Similarly, we say that $\tg$ projects along $\wp$ and
that $g$ is a {\em projection} of $\tg$ along $\wp$. The set of all
automorphisms of $\Gamma$ that lift along $\wp$ is called the {\em maximal group that lifts along $\wp$}. If $G$ is a subgroup
of the maximal group that lifts, then the set $\tG$ of all lifts of elements of $G$ forms a subgroup of $\Aut(\tGa)$ and is called {\em the lift of $G$}. The lift of the maximal group that lifts along $\wp$ is the {\em maximal group that projects along $\wp$}.

The lift of the identity group  is called the {\em group of covering transformations of $\wp$} and denoted $\CT(\wp)$. Whenever the covering graph $\tGa$ is connected, the group $\CT(\wp)$ acts semi-regularly on each fibre, and if it is transitive (and thus regular) on each fibre, then we say that the covering projection
$\wp$ is {\em regular}.

Regular covering projections can equivalently be defined in terms of {\em graph quotients}. Let $\tGa$ be a graph,
let $N \le \Aut(\tGa)$ with the stabiliser 
$N_x$ being trivial for every vertex and for every edge $x$ of $\tGa$,
and let $\tGa/N$ be the graph whose vertices and darts are $N$-orbits of vertices and darts of $\tGa$
and with the functions $\inv_{\tGa/N}$ and $\beg_{\tGa/N}$ mapping a dart $x^N$ of $\tGa/N$ to the $N$-orbit
of $\inv_{\tGa}(x)$ and $\beg_{\tGa}(x)$, respectively (see \cite[Section~2.1]{elabcov}). The corresponding 
{\em quotient projection} $\wp_{N} \colon \tGa \to \tGa/N$, mapping each vertex or dart of $\tGa$ to its $N$-orbit,
is a regular covering projection and $N$ is the group $\CT(\wp_N)$ of covering transformations of $\wp_N$. Every regular covering projection arises in this way.

\begin{lemma}{{\cite[Sections 2.2 and 3]{elabcov}}}
\label{lem:maxlift}
If $\wp \colon \tGa \to \Ga$ is a regular covering projection, then the maximal group that projects along $\wp$
equals the normaliser of $N=\CT(\wp)$ in $\Aut(\tGa)$.
Moreover, $\Ga$ is isomorphic to the quotient graph $\tGa/N$,
the quotient projection $\tGa \to \tGa/N$ is a covering projection isomorphic to $\wp$,
and for a group $G\le \Aut(\Ga)$ and its lift $\tG$, we have $G\cong \tG/N$.
\end{lemma}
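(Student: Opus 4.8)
The plan is to split the statement into its four assertions and prove them in a convenient order, using throughout that, since $\wp$ is regular, $N=\CT(\wp)$ acts regularly on every fibre of $\wp$; in particular $N_x=1$ for every vertex and every dart $x$ of $\tGa$, so the quotient graph $\tGa/N$ and the quotient projection $\wp_N\colon\tGa\to\tGa/N$ are defined as recalled above, and $\wp(x^n)=\wp(x)$ for every $n\in N$ and every vertex or dart $x$.

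First I would establish the isomorphism $\Ga\cong\tGa/N$. The map $\phi\colon\tGa/N\to\Ga$ sending the $N$-orbit $x^N$ of a vertex or dart $x$ to $\wp(x)$ is well defined because $N\le\CT(\wp)$; it carries vertices to vertices and darts to darts, and commutes with $\beg$ and $\inv$ because $\wp$ does and the structure maps of $\tGa/N$ are induced from those of $\tGa$, so $\phi$ is a graph morphism; it is surjective because $\wp$ is, and injective because two vertices, or two darts, with the same $\wp$-image lie in a single fibre of $\wp$, on which $N$ is transitive. Since in the present dart-based formalism a bijective graph morphism has a morphism for its inverse, $\phi$ is an isomorphism, and $\phi\circ\wp_N=\wp$ by construction, so $\wp$ and $\wp_N$ are isomorphic covering projections. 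From here on I identify $\Ga$ with $\tGa/N$ and $\wp$ with $\wp_N$.

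Next I would show that the maximal group that projects along $\wp$ is $\N{\Aut(\tGa)}{N}$. If $\tg\in\Aut(\tGa)$ projects, say $\wp\circ\tg=g\circ\wp$ with $g\in\Aut(\Ga)$, then for $n\in N$ and every $x$ one computes $\wp\bigl(x^{\tg^{-1}n\tg}\bigr)=g^{-1}\bigl(\wp(x^{n\tg})\bigr)=g^{-1}\bigl(\wp(x^{\tg})\bigr)=g^{-1}\bigl(g(\wp(x))\bigr)=\wp(x)$, whence $\tg^{-1}n\tg\in\CT(\wp)=N$ and $\tg$ normalises $N$. Conversely, if $\tg$ normalises $N$ then $(x^N)^{\tg}=(x^{\tg})^N$ for all $x$, so $g\colon x^N\mapsto(x^{\tg})^N$ is a well-defined permutation of the vertices and darts of $\tGa/N=\Ga$; one checks it commutes with $\beg$ and $\inv$, hence lies in $\Aut(\Ga)$, and $\wp\circ\tg=g\circ\wp$ holds by construction, so $\tg$ projects.

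Finally, for $G\le\Aut(\Ga)$ with lift $\tG$: because $\wp$ is an epimorphism, each $\tg\in\tG$ is a lift of a unique $g\in G$, so $\tg\mapsto g$ gives a well-defined map $\rho\colon\tG\to G$; the identity $\wp\circ(\tg_1\tg_2)=(g_1g_2)\circ\wp$ shows $\rho$ is a homomorphism, it is surjective since $\tG$ is by definition the set of all lifts of elements of $G$, and $\ker\rho$ consists of the lifts of $\id_\Ga$, that is of $\CT(\wp)=N$ (note $N\le\tG$ as $\id_\Ga\in G$), so $\tG/N\cong G$ by the first isomorphism theorem. I expect the only point requiring genuine, though routine, care to be the verification that the set-maps $\phi$ and $g$ constructed above really are graph morphisms — that they intertwine $\beg$ and $\inv$ — which is an unwinding of the dart-based definition of $\tGa/N$, together with the observation that a bijective graph morphism is an isomorphism; the rest is formal manipulation with $\wp(x^n)=\wp(x)$ and the regularity of $N$ on the fibres.
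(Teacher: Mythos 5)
The paper does not actually prove this lemma: it is stated as background and attributed to \cite[Sections 2.2 and 3]{elabcov}, so there is no internal proof to compare against. Your self-contained argument is the standard one from that reference and is essentially correct: the identification $\Ga\cong\tGa/N$ via $x^N\mapsto\wp(x)$ (well defined since $N\le\CT(\wp)$, injective by transitivity of $N$ on fibres), the two-way correspondence between projecting automorphisms and elements of $\N{\Aut(\tGa)}{N}$, and the first-isomorphism-theorem argument for $\tG/N\cong G$ are all sound.

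Two small points deserve attention. First, to form the quotient graph $\tGa/N$ in the paper's formalism one needs $N_x=1$ for every vertex and every \emph{edge} $x$, not merely for every vertex and dart; the extra content is that no $n\in N$ interchanges a dart $x$ with $x^{-1}$. This does hold here, but only because semiedges are prohibited: if $x^n=x^{-1}$ with $n\in\CT(\wp)$ then $\wp(x)=\wp(x^n)=\wp(x)^{-1}$, so $\wp(x)$ would underlie a semiedge of $\Ga$. You should say this explicitly rather than passing from fibre-semiregularity on darts to the edge condition. Second, your conjugation computation shows only $\tg^{-1}N\tg\le N$; since the lemma is applied in the paper with $N=\CT(\p)$ an infinite free group (the fundamental group, for the universal covering of a tree), containment does not automatically give equality. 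The fix is immediate — $\tg^{-1}$ also projects (to $g^{-1}$), so the same computation yields $\tg N\tg^{-1}\le N$ and hence $\tg\in\N{\Aut(\tGa)}{N}$ — but it should be stated. With these two sentences added, the proof is complete.
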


\subsection{Splitting of covering projections}

Let $\wp\colon \tGa \to \Ga$ be a regular covering projection between connected graphs
with covering transformation group $N$,  let $G$ be a subgroup of $\Aut(\Ga)$ that lifts along $\wp$ to $\tG$ and let $K$ be a normal subgroup of $N$. Then one can consider the quotient projection $\wp_K \colon \tGa \to \tGa/K$
and define $\wp_{N/K}\colon \tGa/K \to \Ga$ by $\wp_{N/K}(x^K) = \wp(x)$ for every vertex and for every dart $x$ of $\tGa$.
Then the following lemma holds:

\begin{lemma}
The covering projection $\wp_{N/K}$ is regular with covering transformation group
$N/K$ and  $\wp = \wp_{N/K} \circ \wp_K$. Moreover, if $K$ is normalised by $G$, then $G$ lifts along $\wp_{N/K}$ and its lift is $\tG/K$.
\end{lemma}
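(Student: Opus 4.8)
The plan is to establish the statement in three parts. \emph{(i) Well-definedness and the factorisation.} Since $K\le N=\CT(\wp)$, every element of $K$ fixes each fibre of $\wp$ setwise, so $\wp(x^k)=\wp(x)$ for every $k\in K$ and every vertex and dart $x$ of $\tGa$; hence $\wp_{N/K}(x^K):=\wp(x)$ does not depend on the chosen representative. It is then immediate that $\wp_{N/K}$ is surjective (because $\wp$ is) and that it intertwines $\beg$ and the dart-involution with those of $\Ga$ (because the corresponding operations on $\tGa/K$ are computed on representatives); and $(\wp_{N/K}\circ\wp_K)(x)=\wp_{N/K}(x^K)=\wp(x)$ for every vertex and dart $x$, which is the asserted identity.

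\emph{(ii) Regularity and $\CT(\wp_{N/K})=N/K$.} My approach is to recognise $\wp_{N/K}$ as an iterated quotient projection. Since $K\norml N$, each $n\in N$ induces $\bar n\in\Aut(\tGa/K)$ via $(x^K)^{\bar n}:=(x^n)^K$ (well defined because $y=x^k$ gives $y^n=(x^n)^{n^{-1}kn}$ with $n^{-1}kn\in K$), and $n\mapsto\bar n$ is a homomorphism $N\to\Aut(\tGa/K)$ with kernel exactly $K$: if $\bar n=\id$ then for any vertex $\tv$ of $\tGa$ we have $\tv^n=\tv^k$ for some $k\in K$, so $nk^{-1}\in N$ fixes $\tv$, and since $\tGa$ is connected $N$ acts semiregularly, forcing $n=k\in K$. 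The same computation shows the resulting copy of $N/K$ in $\Aut(\tGa/K)$ has trivial vertex stabilisers; it also has trivial edge stabilisers, since if $\bar n$ fixes an edge $\{x^K,(x^{-1})^K\}$ then either $(x^n)^K=x^K$ — whence $n\in K$ as above — or $x^n=(x^k)^{-1}$ for some $k\in K$, in which case $nk^{-1}\in N$ fixes the edge $\{x,x^{-1}\}$ of $\tGa$ and hence is trivial, again giving $n\in K$. Thus $N/K$ acts on $\tGa/K$ with trivial vertex and edge stabilisers, so by the description of regular covering projections via quotients in Section~\ref{BM} the quotient projection $\tGa/K\to(\tGa/K)/(N/K)$ is regular with covering transformation group $N/K$. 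It remains to identify it with $\wp_{N/K}$: the assignment sending the $N/K$-orbit of $v^K$ to $v^N$ is a graph isomorphism $(\tGa/K)/(N/K)\to\tGa/N$ (the standard identification of iterated quotients, verified on vertices, darts, $\beg$ and the dart-involution), and by Lemma~\ref{lem:maxlift} there is an isomorphism $\theta\colon\tGa/N\to\Ga$ with $\theta(v^N)=\wp(v)$ realising $\wp$ as the quotient projection $\tGa\to\tGa/N$; composing, the quotient projection $\tGa/K\to(\tGa/K)/(N/K)$ corresponds under these isomorphisms to $v^K\mapsto\wp(v)=\wp_{N/K}(v^K)$. Hence $\wp_{N/K}$ is regular with $\CT(\wp_{N/K})=N/K$.

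\emph{(iii) The lift of $G$.} Here I read ``$K$ is normalised by $G$'' as ``$K\norml\tG$'': this is the natural reading, since $N\le\tG$ (the lifts of the identity) and $N\norml\tG$ by Lemma~\ref{lem:maxlift}, so $\tG$ acts on $N$ by conjugation and the hypothesis asks this action to preserve $K$. Then, exactly as for $N$ above, each $\tg\in\tG$ induces $\bar\tg\in\Aut(\tGa/K)$ with $(x^K)^{\bar\tg}=(x^\tg)^K$, and $\tg\mapsto\bar\tg$ is a homomorphism $\tG\to\Aut(\tGa/K)$ with kernel $K$: if $\bar\tg=\id$ then, arguing as in part (i), $\bar\tg$ is a lift along $\wp_{N/K}$ of the $\wp$-projection $g$ of $\tg$, forcing $g=\id$ and $\tg\in\CT(\wp)=N$, and then $\tg\in K$ by the kernel computation above. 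If $\tg$ is a lift of $g\in G$ along $\wp$ then, for every vertex and dart $x$ of $\tGa$,
\[
\wp_{N/K}\bigl((x^K)^{\bar\tg}\bigr)=\wp_{N/K}\bigl((x^\tg)^K\bigr)=\wp(x^\tg)=\wp(x)^g=\wp_{N/K}(x^K)^g,
\]
so $\bar\tg$ is a lift of $g$ along $\wp_{N/K}$; hence $G$ lifts along $\wp_{N/K}$ and the image $\{\bar\tg:\tg\in\tG\}\cong\tG/K$ is contained in the lift $H$ of $G$ along $\wp_{N/K}$. Since $|H|=|G|\,|\CT(\wp_{N/K})|=|G|\,|N/K|=|\tG/N|\,|N/K|=|\tG/K|$, equality holds, i.e.\ the lift of $G$ along $\wp_{N/K}$ is $\tG/K$ (identified with its image in $\Aut(\tGa/K)$).

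The step I expect to be the real work is part (ii). Conceptually it is no more than ``a quotient of a quotient is a quotient'', but it has to be carried out at the level of darts and of the dart-involution, not merely of vertices: one must check that $N/K$ (and, in part (iii), $\tG/K$) acts on $\tGa/K$ with trivial vertex and edge stabilisers, and that the canonical identification $(\tGa/K)/(N/K)\cong\tGa/N$ carries the quotient projection to $\wp_{N/K}$ exactly. The rest amounts to unwinding the definitions of Section~\ref{BM}, the only other point deserving an explicit word being the meaning given to ``$K$ normalised by $G$''.
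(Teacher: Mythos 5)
Your proof is correct. Note that the paper itself states this lemma without any proof, treating it as a standard consequence of the description of regular covering projections as quotient projections given in Section~\ref{BM}; your argument is precisely the expected ``iterated quotient'' verification, carried out carefully at the level of darts and the dart-involution, and your reading of ``$K$ is normalised by $G$'' as ``$K$ is normalised by the lift $\tG$'' is the intended one (it is exactly how the lemma is invoked in the proof of Theorem~\ref{the:main}, where $\cG$ normalises $\cP$).
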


If $\Tree$ is an infinite tree and $\p\colon \Tree \to \Gamma$ is a regular covering projection, then we say that $\p$
is {\em universal}. It is well known that for every finite graph there is, up to equivalence of covering projections, a unique
universal covering projection and that it has the following property:

\begin{lemma}
\label{lem:unilift}
If $\Gamma$ is a finite connected graph and $\p\colon \Tree \to \Gamma$ is the universal covering projection,
then $\Aut(\Gamma)$ lifts along $\p$ and $\CT(\p)$ is isomorphic to the fundamental group $\pi(\Gamma,b)$
for some (every) vertex $b$ of $\Gamma$. Moreover, $\CT(\p)_x = 1$ for every vertex and for every edge $x$ of $\Gamma$.
\end{lemma}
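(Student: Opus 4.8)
The plan is to reduce all three assertions to the standard theory of covering projections of graphs recorded in the references of Section~\ref{BM}; below I indicate the key steps. First I would fix a base vertex $b$ and use the concrete model of the universal cover: let $\Tree$ be the graph whose vertices are the reduced walks of $\Gamma$ starting at $b$, a dart from a vertex $W$ being a pair $(W,x)$ with $x$ a dart of $\Gamma$ issuing from the terminal vertex of $W$, with head $\overline{Wx}$ (the reduced form of $Wx$) and with $\inv(W,x)=(\overline{Wx},x^{-1})$. A short check shows that $\beg$ and $\inv$ are well defined, that $\inv$ is a fixed-point-free involution, and that the map $\p$ sending $W$ to its terminal vertex and $(W,x)$ to $x$ is a graph epimorphism that restricts to a bijection on each neighbourhood, hence a covering projection. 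One then verifies that $\Tree$ is connected (each vertex $W=(x_1,\dots,x_n)$ is joined to the trivial walk by the walk through its prefixes) and that $\Tree$ has no nontrivial reduced closed walk: such a walk would project to a reduced walk $Y$ of $\Gamma$ with $\overline{W_0Y}=W_0$, where $W_0$ is its initial vertex; since $W_0$ and $Y$ are reduced this forces $Y=Z^{-1}Z$ for some walk $Z$, which is impossible unless $Z$, and hence $Y$, is trivial. Thus $\Tree$ is a tree and $\p$ is universal, and since the universal covering projection is unique up to equivalence and all the asserted properties are preserved under equivalence, there is no loss in arguing with this model.

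For the isomorphism $\CT(\p)\cong\pi(\Gamma,b)$, I would let $\pi(\Gamma,b)$ act on $\Tree$ by assigning to $c\in\pi(\Gamma,b)$ the map $W\mapsto\overline{cW}$, $(W,x)\mapsto(\overline{cW},x)$. One checks that each such map is a graph automorphism commuting with $\p$ — hence an element of $\CT(\p)$ — that $c\mapsto(W\mapsto\overline{cW})$ is a group homomorphism, and that it is injective by evaluating at the trivial walk. Surjectivity onto $\CT(\p)$ follows from the fact, recalled in Section~\ref{BM}, that $\CT(\p)$ acts semiregularly on each fibre, so that two covering transformations agreeing at a single vertex coincide: given $\alpha\in\CT(\p)$, the image under $\alpha$ of the trivial walk is a reduced closed walk at $b$, that is, an element $c$ of $\pi(\Gamma,b)$, and $\alpha$ agrees with $W\mapsto\overline{cW}$ at the trivial walk, hence on all of $\Tree$.

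To prove that $\Aut(\Gamma)$ lifts along $\p$, given $g\in\Aut(\Gamma)$ I would fix a reduced walk $q$ from $b$ to $g(b)$ and set $\tg(W)=\overline{q\,g(W)}$ and $\tg(W,x)=(\tg(W),g(x))$, where $g(W)$ denotes the image walk. A routine verification shows that $\tg$ is a graph automorphism of $\Tree$, with inverse built analogously from $g^{-1}$ and a reduced walk from $b$ to $g^{-1}(b)$, and that $\p\circ\tg=g\circ\p$; hence $g$ lifts. Finally, $\CT(\p)_x=1$ for every vertex and every edge $x$ of $\Tree$ is immediate from the description, recalled in Section~\ref{BM}, of regular covering projections as quotient projections $\wp_N$ by a group $N=\CT(\wp_N)$ whose vertex and edge stabilisers are trivial; alternatively, semiregularity on fibres disposes of the vertex stabilisers, while a covering transformation fixing an edge either fixes both its endvertices — and is then trivial — or reverses the underlying dart, which would produce a dart of $\Gamma$ fixed by $\inv_\Gamma$, contrary to the absence of semiedges.

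Essentially everything above is routine bookkeeping with reduced walks or a direct appeal to the standard covering-space formalism for graphs; the one point that repays careful attention is the combinatorial set-up of $\Tree$, in particular the verification that no nontrivial reduced closed walk survives in the cover, since this is exactly what guarantees that $\Tree$ is a tree and that $\p$ is the universal covering projection.
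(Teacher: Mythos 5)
Your argument is correct, but it is worth noting that the paper itself offers no proof of this lemma at all: it is stated as part of the background in Section~\ref{BM}, presented as ``well known'' and implicitly delegated to the standard references \cite{MNS,elabcov}. What you have done is supply the folklore argument in full, via the explicit reduced-walk model of the universal cover; all the key verifications are present and sound --- the fixed-point-freeness of $\inv$ on $\Tree$ (using the absence of semiedges in $\Gamma$), the cancellation argument showing that a reduced closed walk in $\Tree$ projects to a reduced walk $Y$ with $\overline{W_0Y}=W_0$ and hence is trivial, the identification of $\CT(\p)$ with $\pi(\Gamma,b)$ acting by $W\mapsto\overline{cW}$ with surjectivity extracted from semiregularity on fibres, the lift $\tg(W)=\overline{q\,g(W)}$ of an automorphism $g$ via a chosen reduced walk $q$ from $b$ to $g(b)$, and the dichotomy for an edge stabiliser (fix the dart, hence a vertex, hence trivial; or reverse it, producing a semiedge downstairs). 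Two small presentational points: you assert that $\p$ is \emph{universal} (which, in the paper's terminology, requires $\p$ to be \emph{regular}, i.e.\ $\CT(\p)$ transitive on fibres) already at the end of the first paragraph, whereas the transitivity only becomes available once you have the $\pi(\Gamma,b)$-action of the second paragraph --- the element $\overline{W'W^{-1}}$ carries $W$ to $W'$ within any fibre, so this is easily repaired by reordering; and the hypothesis that $\Tree$ is an \emph{infinite} tree tacitly excludes the degenerate case where $\Gamma$ is itself a tree, which is built into the lemma's hypothesis rather than something you need to prove. Neither affects correctness; your write-up is a legitimate self-contained substitute for the citation.
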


\section{Results and proofs}

 Given a group $X$ and a subgroup $Y$, we denote by $[X,Y]$ the commutator subgroup defined by $[X,Y]:=\langle x^{-1}y^{-1}xy\mid x\in X,y\in Y\rangle$, by $\N XY$ the normaliser of $Y$ in $X$ and we write $X^p:=\langle x^p\mid x \in X\rangle$. By $\Zent{G}$, we denote the centre of the group $G$.

Recall that a $\mathbb{Z}_pX$-module $V$ is regular if $V$ is isomorphic to the group algebra $\mathbb{Z}_pX$ (seen as a $\mathbb{Z}_pX$-module). This means that $\dim_{\mathbb{Z}_p}(V)=|X|$ and that $V$ has a $\mathbb{Z}_p$-basis $(v_x\mid x\in X)$ such that the action of $X$ on $(v_x\mid x\in X)$ is permutation isomorphic to the action of $X$ on itself by right multiplication. In other words, $v_x y=v_{xy}$, for each $x,y\in X$.

\begin{theorem}
\label{the:tree}
Let $p$ be a prime, let $\Tree$ be an infinite tree, let $\cG \le \Aut(\Tree)$, let $\cN$ be a non-identity normal subgroup of $\cG$ of finite index such that 
$\cN_x = 1$ for every vertex and for every edge $x$ of $\Tree$,
and let $\cH = \N {\Aut(\Tree)} {\cN}$.
If
 $\cH/\cN$ acts faithfully on $\H_{1}(\Tree/\cN;\ZZ)$,
then there exists a normal subgroup $\cP$ of $\cN$ of finite index such that  $\N {\cH}{\cP}=\cG$
and that $\cN/\cP$ is $p$-group.
\end{theorem}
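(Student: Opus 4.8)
The plan is to pass to the universal covering picture and to build $\cP$ as the preimage in $\cN$ of a carefully chosen subgroup of a finite $p$-group quotient of $\cN$. Since $\cN$ acts on $\Tree$ with trivial vertex- and edge-stabilisers, the quotient projection $\p\colon\Tree\to\Gamma$, with $\Gamma:=\Tree/\cN$, is the universal covering projection; by Lemma~\ref{lem:unilift}, $\cN=\CT(\p)$ is free of finite rank $m$ (the Betti number of $\Gamma$) and $\cN/[\cN,\cN]\cong\H_1(\Gamma;\ZZ)$ as $\ZZ$-modules, while by Lemma~\ref{lem:maxlift}, $\cH$ is the maximal group that projects along $\p$ and $A:=\cH/\cN\cong\Aut(\Gamma)$. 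The hypothesis thus says exactly that $A$ acts faithfully on $\cN/[\cN,\cN]$; in particular $A$ acts faithfully on $\cN$ modulo inner automorphisms (if $h\in\cH$ centralises $\cN$ then it acts trivially on $\cN/[\cN,\cN]$, hence lies in $\cN$, hence in $\Zent{\cN}=1$, provided $m\ge 2$; the low-Betti-number cases $m\le 1$ force $|A|\le 2$ and I would dispose of them at the outset). Writing $G:=\cG/\cN\le A$, note that $\cP\norml\cN$ implies $\cN\le\N{\cH}{\cP}$, and that $A=\cH/\cN$ acts on the set of finite-index $\cN$-normal subgroups of $\cN$ by conjugation (this is well defined because $\cN$ fixes each of its own normal subgroups); hence $\N{\cH}{\cP}=\cG$ is equivalent to $\mathrm{Stab}_A(\cP)=G$. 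So it is enough to produce a finite-index $\cP\norml\cN$ with $\cN/\cP$ a $p$-group and $\mathrm{Stab}_A(\cP)=G$.

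To do this I would work inside a characteristic filtration of $\cN$ with $p$-group quotients. Let $(\cN_i)_{i\ge1}$ be the lower exponent-$p$ central series of $\cN$ (or its Jennings series): $\cN_1=\cN$, $\cN_{i+1}=[\cN_i,\cN]\cN_i^{\,p}$, so each $\cN_i$ is characteristic of finite index, each $\cN/\cN_i$ is a finite $p$-group, $\bigcap_i\cN_i=1$, and inner automorphisms act trivially on each layer $L_i:=\cN_i/\cN_{i+1}$, so that $A$ acts on every $L_i$; the associated graded is (essentially) the free restricted Lie algebra over $\ZZ_p$ generated by $L_1=\cN/[\cN,\cN]\cN^p$. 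The crux of the argument, and what I expect to be the main obstacle, is the following representation-theoretic statement: \emph{for some $i$ the $\ZZ_pA$-module $L_i$ contains a copy of the regular module $\ZZ_pA$ as a submodule.} This is where faithfulness is used decisively, and where the notion of regular module recalled before the statement enters. Although $L_1$ may be too small — or, when $p\mid|A|$, too ``untwisted'' — to contain the regular module, iterated Lie brackets of elements of $L_1$ eventually provide the necessary room, and the faithfulness of the $A$-action removes any representation-theoretic obstruction; this is a modular analogue of the classical description of the free Lie algebra on a faithful module as a representation of the acting group. (When $A$ does not already act faithfully on $L_1$, which for $p=2$ can happen, this step needs extra care: one realises the quotient of $A$ that does act faithfully on $L_1$ by the regular-module argument, and detects the remaining elements of $A$ using a suitably asymmetric non-abelian $p$-group quotient of $\cN$ on which they act non-trivially.)

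Granting such an $i$, fix an $A$-submodule $R\cong\ZZ_pA$ of $L_i$ with regular basis $(v_a)_{a\in A}$, $v_a\cdot b=v_{ab}$, and put $w:=\sum_{h\in G}v_h\in R$ and $W:=\ZZ_p\,w\le R\le L_i$. Since $L_i=\cN_i/\cN_{i+1}$ is central in the $p$-group $Q:=\cN/\cN_{i+1}$, the subgroup $W$ is normal in $Q$; let $\cP$ be its full preimage in $\cN$. Then $\cN_{i+1}\le\cP\norml\cN$, the index $[\cN:\cP]$ is finite and $\cN/\cP\cong Q/W$ is a $p$-group, so $\cP$ has the required form. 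As $\cN_{i+1}$ is characteristic and $A$ acts on $L_i\le Q$ honestly, the stabiliser of $\cP$ in $A$ equals the stabiliser of $W$; and since $R$ is $A$-invariant with $A$ acting on it as the regular module, for $a\in A$ one has $a(W)=W$ if and only if $\{ha:h\in G\}=G$, that is, $a\in G$. Hence $\mathrm{Stab}_A(\cP)=G$, equivalently $\N{\cH}{\cP}=\cG$, while $\cN/\cP$ is a finite $p$-group; this completes the proof.
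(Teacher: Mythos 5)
Your overall architecture coincides with the paper's: the same exponent-$p$ central filtration $\cN_{i+1}=\cN_i^p[\cN_i,\cN]$, the same reduction of $\N{\cH}{\cP}=\cG$ to a stabiliser computation for the induced action of $A=\cH/\cN$ on a layer $V_i=\cN_i/\cN_{i+1}$, and the same endgame once a regular module is in hand (your one-dimensional $W=\ZZ_p\sum_{h\in G}v_h$ works just as well as the paper's choice of the span $\langle v_h\mid h\in G\rangle$, and your check that $\mathrm{Stab}_A(W)=G$ is correct). The genuine gap is the step you yourself single out as ``the crux'' and ``the main obstacle'': the existence of an index $i$ for which $V_i$ contains a regular submodule. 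You assert this with a heuristic about iterated Lie brackets ``eventually providing the necessary room'' and faithfulness ``removing any representation-theoretic obstruction'', but you supply neither a proof nor a reference, and this is precisely the nontrivial mathematical content of the theorem. The paper obtains it by quoting Theorems~2 and~3 of Bryant and Kov\'acs (\emph{Lie representations and groups of prime power order}), applied not to $A$ itself but to the full group $\Sigma=\Aut(V_1)$: some layer $V_i$ of the free group $\cN$ contains a regular $\ZZ_p\Sigma$-submodule, and restricting to $A\le\Sigma$ yields regular $\ZZ_pA$-submodules since $\ZZ_p\Sigma$ restricted to $A$ is a direct sum of copies of $\ZZ_pA$. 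Without this input, or an equivalent argument, your proof is incomplete at its central point.

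Two smaller remarks. Your worry about faithfulness on $L_1=V_1$ when $p=2$ is legitimate: the hypothesis gives faithfulness on $\H_1(\Tree/\cN;\ZZ)$, and passing to the mod-$p$ layer uses that the reduction $\mathrm{GL}_m(\ZZ)\to\mathrm{GL}_m(\ZZ_p)$ is injective on finite subgroups, which holds for odd $p$ by Minkowski's lemma but can fail for $p=2$; the paper passes over this point silently, so your flag is a fair observation rather than a defect of your argument relative to the paper's. Finally, your justification that $\N{\cH}{\cP}=\cG$ is equivalent to $\mathrm{Stab}_A(\cP)=G$ (because $\cP$ is normal in $\cN$, so inner automorphisms of $\cN$ fix it) is a clean repackaging of the paper's concluding computation with the element $\tau\in\Sigma$ induced by $x\in\N{\cH}{\cP}$; the two are equivalent and both ultimately invoke the faithfulness of $\cH/\cN$ on $V_1$.
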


\begin{proof}
The idea for the proof of this theorem is inspired by a surprisingly unrelated problem solved by Bryant and Kov\'{a}cs in~\cite{BK}. We follow closely~\cite{BK} and we use some of the observations therein. This is the second time that this paper on Lie algebras has proved useful in the context of group actions on graph, see for instance~\cite{PSV} for another application.

Observe that, as $\cN\ne 1$ and $\cN_v=1$ for every $v\in \V(\Tree)$, from the Bass-Serre theory, we deduce that $\cN$ is a non-identity free group, see~\cite[Proposition~$4.5$]{Wood}. Following~\cite{BK}, we construct a filtration of the free group $\cN$. Define $\cN_1:=\cN$ and, for $i\in\mathbb{N}\setminus\{0\}$, $\cN_{i+1}:=\cN_i^p[\cN_i,\cN]$.  By construction, $\cN_{i+1}$ is the smallest normal subgroup of 
$\cN$
contained in $\cN_i$ such that $\cN_i/\cN_{i+1}$ has exponent $p$ and is central in $\cN/\cN_{i+1}$, that is, 
$\cN_i/\cN_{i+1}\le \Zent {\cN/\cN_{i+1}}$. Moreover, $\cN_{i+1}$ is normal in $\cG$ because so is $\cN$. 

Write $G:=\cG/\cN_1$ and $H:=\cH/\cN_1$. As $\cG\le \cH$, we have $G\le H$.  Given $i\in\mathbb{N}\setminus\{0\}$, define $V_i:=\cN_i/\cN_{i+1}$.  As $\cN_{i}$ is centralised by $\cN=\cN_1$ modulo $\cN_{i+1}$,  the action of $\cH$ by conjugation on $\cN_i/\cN_{i+1}=V_i$ defines a group homomorphism 
$H=\cH/\cN_1\to \Aut(V_i)$, that is, $H$ acts as a linear group on the $\mathbb{Z}_p$-vector space $V_i$ and hence $V_i$ is a $\mathbb{Z}_pH$-module. The inclusion $G\le H$ allows us to regard, via the restriction mapping, $V_i$ also as $\mathbb{Z}_pG$-modules.

We now require a few facts, from~\cite{Wood} and from~\cite{BK}. From~\cite[Theorem~$9.2$]{Wood}, the $\mathbb{Z}H$-module $\H_1(\Tree/\cN;\mathbb{Z})$ is isomorphic to the $\mathbb{Z}H$-module $\cN_1/[\cN_1,\cN_1]$. Therefore the $\mathbb{Z}_pH$-module $\H_1(\Tree/\cN;\mathbb{Z}_p)=\H_1(\Tree/\cN;\mathbb{Z})\otimes \mathbb{Z}_p$ is isomorphic to the $\mathbb{Z}_pH$-module $\cN/[\cN,\cN]\otimes \mathbb{Z}_p\cong \cN/[\cN,\cN]\cN^p=\cN_1/\cN_2=V_1$. Now, the hypothesis in the statement of the theorem allows us to conclude that $H$ acts faithfully on $V_1=\cN_1/\cN_2$ and hence we can view $H$ as a subgroup of $\Aut(\cN_1/\cN_2)=\Aut(V_1)$. This fact will allow us to apply directly the results from~\cite{BK}. Let $\Sigma=\Aut(V_1)$. From ~\cite[page 416]{BK} it follows that the action of $\Sigma$ on $V_1$ induces an action on $V_i$ and, moreover, the embedding of $H$ in $\Sigma$ is compatible with the action of $H$ defined on $V_i$ above. 

From~\cite[Theorems~2 and 3]{BK}, we deduce that there exists a positive integer $i$ such that
the $\mathbb{Z}_p\Sigma$-module $V_i$ contains a regular submodule. 
Let $R$ be a regular $\mathbb{Z}_p\Sigma$-module contained in $V_{i}$
and let $(r_\sigma \mid \sigma \in \Sigma)$ be a $\ZZ_p$-basis of $R$
with $r_\sigma \delta = r_{\sigma\delta}$ for every $\sigma, \delta \in \Sigma$.
Let $P:=\langle r_\sigma \mid \sigma \in G \rangle$ and observe that $P$ is a regular $\ZZ_pG$-module.

Since $V_i=\cN_i/\cN_{i+1}$, we may write $P=\cP/\cN_{i+1}$ for some subgroup $\cP$  of $\cN_i$ containing $\cN_{i+1}$. Observe that $\cN/\cP$ is a $p$-group because $\cN/\cP$ is a quotient of the $p$-group $\cN_1/\cN_{i+1}$. Moreover, the index of $\cP$ 
in $\cN$ is finite because $\cN_{i+1}$ has finite index in $\cN_1=\cN$. (Each $\mathbb{Z}_p$-vector space $V_i$ is finite dimensional because $\cN_1$ is finitely generated.)

Let $x\in \N \cH \cP=\N{\Aut(\Tree)} \cN\cap \N {\Aut(\Tree)}\cP$. Since $x$ normalises $\cN$, $x$ acts by conjugation as a linear transformation of the vector spaces $\cN_1/\cN_2=V_1$ and $\cN_i/\cN_{i+1}=V_i$.  Denote by $\tau\in \Aut(V_1) = \Sigma$ the linear transformation of $V_1$ induced by the conjugation of $x$. Now, $\tau$ fixes setwise $R$ because $R$ is a $\mathbb{Z}_p\Sigma$-submodule of $V_i$. Since $x$ normalises $\cP$, $\tau$ fixes setwise $\cP/\cN_{i+1}=P$. Since $r_{1} \in P$, we see that  $r_{1} \tau=r_\tau\in P=\langle r_\sigma\mid \sigma\in G \rangle$ and hence $\tau\in G$. Let $y\in \cG$ be an element projecting to $\tau$. Now, $xy^{-1}$ projects to the identity element of $\Sigma=\Aut(V_1)$. Therefore $xy^{-1}$ centralises $V_1=\cN_1/\cN_2$. Observe that $xy^{-1}$ lies in $\cH$ because so does $x$ and $y$. By hypothesis, $\cH/\cN$ acts faithfully on $\H_1(\Tree/\cN;\mathbb{Z}_p)=V_1$. Therefore $xy^{-1}\in \cN$. Since $\cN\le \cG$ and $y\in \cG$, we obtain $x\in \cG$. We have thus shown $\N \cH \cP\le \cG$; the inclusion $\cG\le \N \cH \cP$ is obvious.
\end{proof}

\begin{theorem}
\label{the:main}
Let $p$ be a prime, let $\Gamma$ be a finite connected graph such that the induced action of $\Aut(\Gamma)$
 on $\H_1(\Ga;\ZZ)$ is faithful, and  let $G\le \Aut(\Gamma)$.  
 Then there exists a regular covering projection $\wp \colon \tGa \to \Ga$
with $\tGa$ finite, such that the maximal group that lifts along $\wp$ is $G$ and that the group of covering transformations of $\wp$ is a $p$-group.
\end{theorem}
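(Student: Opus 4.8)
\textbf{Proof plan for Theorem~\ref{the:main}.}
The plan is to reduce the statement about graphs to Theorem~\ref{the:tree} by passing to the universal covering projection. First I would let $\p\colon\Tree\to\Ga$ be the universal covering projection, which exists and is unique by the remarks preceding Lemma~\ref{lem:unilift}. By that lemma, $\Aut(\Ga)$ lifts along $\p$; write $\cH$ for the maximal group that projects along $\p$, so that $\cH/\CT(\p)\cong\Aut(\Ga)$ by Lemma~\ref{lem:maxlift}, and set $\cN:=\CT(\p)$. Then $\cN$ is a non-identity normal subgroup of $\cH$ of finite index (its index is $|\Aut(\Ga)|$, using connectedness of $\Ga$), and by the last sentence of Lemma~\ref{lem:unilift} we have $\cN_x=1$ for every vertex and every edge $x$ of $\Tree$. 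Moreover $\cH=\N{\Aut(\Tree)}{\cN}$ by Lemma~\ref{lem:maxlift}. Let $\cG\le\cH$ be the lift of $G$; then $\cG$ is a subgroup of $\cH$ containing $\cN$ (since $\CT(\p)$ is the lift of the trivial group, which is contained in the lift of $G$), and $\cG$ normalises $\cN$, so $\cG$ is normal in $\cH$? — not necessarily, but what we need is only $\cN\le\cG\le\cH$ with $\cN$ normal in $\cH$, which holds.

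Next I would verify the hypothesis of Theorem~\ref{the:tree}: I need $\cH/\cN$ to act faithfully on $\H_1(\Tree/\cN;\ZZ)$. Here $\Tree/\cN\cong\Ga$ by Lemma~\ref{lem:maxlift}, and under this identification the conjugation action of $\cH/\cN\cong\Aut(\Ga)$ on $\H_1(\Tree/\cN;\ZZ)=\H_1(\Ga;\ZZ)$ is exactly the natural action of $\Aut(\Ga)$ on the integral cycle space — this is the point where I would need to unwind the definitions of $\H_1$ and of how a graph automorphism acts on reduced closed walks, but it is a standard functoriality statement. By the hypothesis of Theorem~\ref{the:main}, this action is faithful, so the hypothesis of Theorem~\ref{the:tree} holds with this $p$, $\Tree$, $\cG$, $\cN$, $\cH$.

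Applying Theorem~\ref{the:tree} then produces a normal subgroup $\cP$ of $\cN$ of finite index with $\N{\cH}{\cP}=\cG$ and $\cN/\cP$ a $p$-group. I would then take $\tGa:=\Tree/\cP$ and $\wp:=\wp_{\cN/\cP}\colon\Tree/\cP\to\Tree/\cN\cong\Ga$, the map induced by the splitting construction in the subsection on splitting of covering projections. Since $\cP$ has finite index in $\cN$ and $\cN$ has finite index in $\cH$, while $\Tree/\cH$ is finite, $\tGa=\Tree/\cP$ is a finite graph. Because $\cP$ has trivial vertex and edge stabilisers in $\Aut(\Tree)$ (being contained in $\cN$, which has this property), $\wp$ is a regular covering projection with $\CT(\wp)=\cN/\cP$, a $p$-group, as required. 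Finally I must check that the maximal group that lifts along $\wp$ is exactly $G$: by Lemma~\ref{lem:maxlift} applied to $\wp$, the maximal group that projects along $\wp$ is $\N{\Aut(\tGa)}{\cN/\cP}$; lifting everything back up to $\Tree$ (using that every automorphism of $\tGa=\Tree/\cP$ lifts to $\Tree$ along the universal cover $\Tree\to\tGa$), this corresponds to $\N{\Aut(\Tree)}{\cP}$ intersected with $\N{\Aut(\Tree)}{\cN}=\cH$, which is $\N{\cH}{\cP}=\cG$; and $\cG/\cN$ corresponds to $G$ under $\cH/\cN\cong\Aut(\Ga)$. Hence the maximal group that lifts along $\wp$ is $G$.

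\textbf{Main obstacle.} The delicate bookkeeping is in the last paragraph: translating "the maximal group that lifts along $\wp$" through the two-step factorisation $\Tree\to\tGa\to\Ga$ into a normaliser condition inside $\cH$, and checking that $\N{\cH}{\cP}=\cG$ really does correspond to the maximal lifting group of $\wp$ rather than, say, to a group sitting strictly between. This requires using Lemma~\ref{lem:maxlift} carefully and the fact that $\cG$ contains $\cN$ (so that $\cG$, being normal in $\cH$ and containing $\cN$, descends to a well-defined subgroup $G=\cG/\cN$ of $\Aut(\Ga)$ whose lift along $\wp$ is $\cG/\cP$). The identification of the $\cH$-module $\H_1(\Tree/\cN;\ZZ)$ with the $\Aut(\Ga)$-module $\H_1(\Ga;\ZZ)$ is conceptually routine but also needs to be stated with some care, since the whole argument rests on transporting the faithfulness hypothesis between the two settings.
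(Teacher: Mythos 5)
Your proposal is correct and follows essentially the same route as the paper: pass to the universal covering $\Tree\to\Ga$, identify $\cN=\CT$, $\cH=\N{\Aut(\Tree)}{\cN}$ and the lift $\cG$ of $G$, transport the faithfulness hypothesis to $\H_1(\Tree/\cN;\ZZ)$, apply Theorem~\ref{the:tree} to get $\cP$, and descend via the splitting $\mu=\wp\circ\wp_\cP$. The only (immaterial) difference is in the final bookkeeping: you compute the maximal projecting group of $\wp$ directly as $\N{\Aut(\tGa)}{\cN/\cP}$ and identify it with $\N{\cH}{\cP}/\cP=\cG/\cP$, while the paper takes the lift $\tM$ of the maximal lifting group, lifts it once more to $\Tree$, and shows the result lies in $\N{\cH}{\cP}=\cG$ — the two computations are equivalent.
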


\begin{proof}
Let $\mu \colon \Tree \to \Gamma$ be the universal covering projection, see Section~\ref{BM}.
 Then $\Tree$ is an infinite tree and 
 in view of Lemma~\ref{lem:unilift},
$G$ lifts along $\mu$ to a group $\cG\le \Aut(\Tree)$. Let $\cN = \CT(\mu)$.
Then $\cN\ne 1$,
 $\cN_x = 1$ for every vertex and for every edge $x$ of $\Tree$, $\Tree/\cN \cong \Gamma$,
and we may identify $\Gamma$ with $\Tree/\cN$ in such a way that $\cG/\cN = G$ and that
 the quotient projection $\wp_\cN \colon \Tree \to \Tree/\cN$ is $\mu$.
 
Let $\cH=\N{\Aut(\Tree)} {\cN}$. By Lemma~\ref{lem:maxlift}, $\cH$ is the largest group that projects along $\mu$ and thus,
since $\Aut(\Gamma)$ lifts along $\mu$, $\cH$ is the lift of $\Aut(\Gamma)$.
  
Since $\Aut(\Ga)$ acts faithfully on $\H_1(\Ga;\ZZ)$, by Theorem~\ref{the:tree}, there exists
a normal subgroup $\cP$ of $\cN$ of finite index such that  $\N{\cH}{\cP}=\cG$ and $\cN/\cP$ is a $p$-group. 

 Let $\tGa = \Tree/\cP$ and $\tG = \cG/\cP \le \Aut(\tGa)$.
In view of Lemma~\ref{lem:maxlift},
the quotient projection $\wp_\cP \colon \Tree \to \tGa$ is a covering projection 
and there exists a regular covering projection $\wp \colon \tGa \to \Ga$ such
that $\mu =  \wp  \circ \wp_\cP$. Moreover, since $\cG$ normalises $\cP$, the group
$G$ lifts along  $\wp$ and its lift is $\tG$.
 
 Let $M\le \Aut(\Gamma)$ be the maximal group that lifts along $\wp$ and let $\tM \le \Aut(\tGa)$ be its lift.
Clearly, $G\le M$ and thus $\tG \le \tM$.
Since $\Tree$ is a tree, $\wp_\cP$ is a universal covering projection, and
in view of Lemma~\ref{lem:unilift},
$\tM$ lifts along $\wp_\cP$ to some $\cM \le \Aut(\Tree)$. 
But then
$\cM$ is the lift of $M$ along $\mu =  \wp  \circ \wp_\cP$,
and thus
$\cM\le \N{\Aut(\Tree)} {\cN}=\cH$. On the other hand, $\cM$ normalises $\cP$ and so 
$\cM\le \N{\cH}{\cP}= \cG$. But then $\tM = \cM/\cP \le \cG/\cP = \tG$, and hence $\tM = \tG$.
Therefore,
$M = \tM/ (\cN/\cP) = \tG/ (\cN/\cP) = G$, and 
 thus $G$ is the maximal group that lifts along $\wp$, as required.
\end{proof}

Let us now discuss the condition of $G$ acting faithfully on $\H_1(\Ga;\ZZ)$.

\begin{lemma}\label{lemma11}
If $\Gamma$ is a simple $3$-edge-connected graph, then $\Aut(\Gamma)$ acts
faithfully on $\H_1(\Ga;\ZZ)$.
\end{lemma}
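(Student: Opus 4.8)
The plan is to show that a non-trivial automorphism $g$ of a simple $3$-edge-connected graph $\Gamma$ must move some element of $\H_1(\Gamma;\ZZ)$. Suppose for contradiction that $g \ne \id$ acts trivially on $\H_1(\Gamma;\ZZ)$. First I would translate this into combinatorial language: for any oriented cycle $C$ of $\Gamma$, the image $C^g$ represents the same homology class as $C$. Since $g$ is not the identity, it moves some dart, hence (as $\Gamma$ is simple) some edge; because $\Gamma$ is $3$-edge-connected it has minimum degree at least $3$ and in particular no vertices of degree $\le 1$, so $g$ moves some edge $e = \{u,v\}$ to a distinct edge $e^g$. I would aim to produce a cycle $C$ through $e$ whose homology class is demonstrably changed by $g$, contradicting the assumption.

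The key mechanism I expect to use is the following: in a $3$-edge-connected graph, for every edge $e$ and every other edge $f$, there is a cycle containing $e$ but avoiding $f$ (this is a standard consequence of $3$-edge-connectivity via Menger's theorem — deleting $e$ leaves a $2$-edge-connected graph, in which any two edges, in particular $e$'s former role replaced by a path, lie on common cycles, and one has enough freedom to dodge $f$). Concretely, pick $e$ with $e^g \ne e$. Build a cycle $C$ through $e$ avoiding the edge $e^g$; then $C^g$ is a cycle through $e^g$ avoiding $e^{g^2}$, in particular $C^g$ does not contain $e$ while $C$ does. Now I would look at the coefficient of (the basis element corresponding to) the edge $e$ in the homology classes $[C]$ and $[C^g]$ — using an oriented cycle basis relative to a spanning tree chosen so that $e$ is a cotree edge, the coefficient of $e$ in $[C]$ is $\pm 1$ and in $[C^g]$ is $0$. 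Hence $[C] \ne [C^g]$, so $g$ does not act trivially on $\H_1(\Gamma;\ZZ)$ — the desired contradiction.

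A cleaner way to package the same idea, which I would probably adopt to keep the argument self-contained: $3$-edge-connectivity ensures that for each edge $e$ there exist \emph{two} cycles $C_1, C_2$ through $e$ with $C_1 \cap C_2 = \{e\}$ (equivalently, three internally edge-disjoint paths between the endvertices of $e$, one of them being $e$ itself). Given such a configuration at $e$ with $e^g \ne e$, the cycles $C_1^g, C_2^g$ pass through $e^g$. If $g$ fixed every homology class, then in particular $[C_1] + [C_2]$, which is the class of the cycle $C_1 \triangle C_2$ (symmetric difference), would be fixed; tracking the coefficient of $e$ as above gives the contradiction, since $e$ appears with nonzero coefficient in $[C_1]$ but $e \notin C_1^g$ when $e \notin \{e^g$-neighbourhood$\}$ — and $3$-edge-connectivity lets us arrange $C_1$ to avoid $e^g$.

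The main obstacle, and the one point that needs care rather than hand-waving, is the graph-theoretic input: producing, in a $3$-edge-connected graph, a cycle through a prescribed edge $e$ that avoids a second prescribed edge $f$ (or, equivalently, the two-cycles-meeting-only-in-$e$ statement). This is where $3$-edge-connectivity is genuinely used, and where the hypotheses of simplicity and of $3$-edge-connectivity (as opposed to $2$) matter; everything else — the identification of the coefficient of a cotree edge in a homology class, and the fact that $g$ permutes edges and hence acts on an oriented cycle basis — is routine given the setup in Section~\ref{BM}. I would cite or quickly prove the Menger-type fact and then assemble the contradiction in a few lines.
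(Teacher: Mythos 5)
Your route is genuinely different from the paper's. The paper starts from a \emph{vertex} $v$ with $v^g\ne v$, takes two cycles $C_1,C_2$ through the $2$-paths $uvw$ and $uvz$ (this is where $3$-edge-connectivity enters), observes that $g$ must fix each $C_i$ as an oriented cycle, and then builds an auxiliary closed walk out of two subpaths of $C_1,C_2$ whose homology class visibly cannot be $g$-invariant. You instead start from a moved \emph{edge} $e$ and use the cleaner Menger-type input that in a $3$-edge-connected graph there is a cycle through $e$ avoiding any other prescribed edge; the contradiction is then read off from a single coordinate of the homology class. Your reduction to a moved edge is fine (in a simple graph of minimum valency $\ge 3$, a nontrivial automorphism must move an edge), and the graph-theoretic input is correct and easy to justify, although your justification is backwards: one should delete $f=e^g$ (not $e$), note that $\Gamma-f$ is $2$-edge-connected and hence bridgeless, and take a cycle of $\Gamma-f$ through $e$.

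There is, however, one concrete non sequitur in your argument: from ``$C$ avoids $e^g$'' you conclude that ``$C^g$ avoids $e^{g^2}$, in particular $C^g$ does not contain $e$.'' The first clause is right, but the second does not follow --- nothing prevents $e^{g^{-1}}$ from lying on $C$, i.e.\ $e$ from lying on $C^g$, so the coefficient of $e$ in $[C^g]$ need not vanish and your chosen coordinate fails to separate $[C]$ from $[C^g]$. The repair is immediate: compare the two classes at the cotree coordinate corresponding to $e^g$ rather than to $e$. Indeed $e^g\notin C$ by construction, so that coefficient of $[C]$ is $0$, while $e^g\in C^g$, so that coefficient of $[C^g]$ is $\pm 1$ (and $e^g$ can be taken as a cotree edge since $\Gamma-e^g$ is connected). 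Hence $[C]\ne[C^g]$, the desired contradiction. With this one-line fix your proof is complete, and arguably tidier than the paper's; your second ``packaging'' via $C_1\triangle C_2$ suffers from the same unjustified claim that $e\notin C_1^g$ and should be dropped rather than repaired.
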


\begin{proof}
Suppose on the contrary that the action of $\Aut(\Gamma)$ on $\H_1(\Ga;\ZZ)$ is not faithful. Then there exists
an automorphism $g$ fixing every element of $\H_1(\Ga;\ZZ)$ and a vertex $v$ such that $v^g \not = v$.
Let $u,w$ and $z$ be three neighbours of $v$. By $3$-edge-connectivity of $\Gamma$ it follows that
there is a cycle $C_1$ through the $2$-path $uvw$ and a cycle $C_2$ through the $2$-path $uvz$.
Now fix an orientation of $C_1$ and $C_2$ in such a way that $u$ is a predecessor of $v$ in both $C_1$ and $C_2$.
Consider $C_1$ and $C_2$ as elements of $\H_1(\Ga;\ZZ)$. By assumption, $g$ preserves $C_1$ and $C_2$ together with their orientation. In particular, the vertex $v^g$ lies on $C_1$ and on $C_2$. For $i\in \{1,2\}$, let $P_i$ be the path from $v^g$ to $u$ following
the cycle $C_i$ in the positive direction with respect to the chosen orientation. Note that, since $g$ preserves the orientation,
$u^g$ belongs to neither  $P_1$ nor $P_2$. Now consider the closed walk $C$ obtained by concatenating $P_1$ with the reverse of $P_2$ and consider it as an element of $\H_1(\Ga;\ZZ)$. By assumption, $C$ is fixed by $g$. However, the vertex $u$ belongs to $C$, while $u^g$ does not, a contradiction.
\end{proof}

Since in connected vertex-transitive graphs the edge connectivity equals the valency (see for instance~\cite[Lemma~$3.3.3$]{GoRo}), we get the following corollary.
\begin{corollary}
\label{cor:3ec}
If $\Gamma$ a simple connected vertex-transitive graph of valency at least $3$, then $\Aut(\Gamma)$ acts
faithfully on $\H_1(\Ga;\ZZ)$.
\end{corollary}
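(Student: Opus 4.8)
The plan is to obtain the corollary directly from Lemma~\ref{lemma11} together with the classical theorem that the edge connectivity of a connected vertex-transitive graph equals its valency (see~\cite[Lemma~$3.3.3$]{GoRo}). Since $\Gamma$ is vertex-transitive it is regular, so its valency $d$ is well defined, and by hypothesis $d\ge 3$.

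First I would invoke~\cite[Lemma~$3.3.3$]{GoRo}: as $\Gamma$ is connected and vertex-transitive, its edge connectivity equals $d$, and since $d\ge 3$ this says precisely that $\Gamma$ is $3$-edge-connected. Combined with the standing hypothesis that $\Gamma$ is simple, all the hypotheses of Lemma~\ref{lemma11} are now in place, and that lemma yields that $\Aut(\Gamma)$ acts faithfully on $\H_1(\Ga;\ZZ)$, which is exactly the assertion.

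There is no genuine obstacle: the corollary is essentially a repackaging of Lemma~\ref{lemma11}, and the only point deserving a moment's attention is the bookkeeping, namely checking that ``$3$-edge-connected'' in Lemma~\ref{lemma11} is read as ``edge connectivity at least $3$'', so that the conclusion of~\cite[Lemma~$3.3.3$]{GoRo} feeds into it verbatim. If one preferred not to quote~\cite[Lemma~$3.3.3$]{GoRo}, an alternative is to establish only the two facts actually used inside the proof of Lemma~\ref{lemma11} — that through every $2$-path of $\Gamma$ there passes a cycle, and that no vertex of $\Gamma$ is a cutvertex — and both follow, for a connected vertex-transitive graph of valency at least $3$, from the standard lower bound on the (vertex) connectivity of such graphs; but quoting the edge-connectivity theorem is the cleanest route.
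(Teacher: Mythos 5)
Your proposal is correct and follows exactly the paper's route: the paper also deduces the corollary from Lemma~\ref{lemma11} by citing the fact that the edge connectivity of a connected vertex-transitive graph equals its valency (\cite[Lemma~$3.3.3$]{GoRo}), so valency at least $3$ gives $3$-edge-connectivity. Nothing further is needed.
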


\section{A corollary}\label{sec:Cor}

Given a graph $\Gamma$, $G\le \Aut(\Gamma)$ and a vertex $v$ of $\Gamma$, we let 
 $G_v^{\Gamma(v)}$  denote the  permutation group
  induced by the action of the vertex-stabiliser $G_v$  on the neighbourhood $\Gamma(v)$ of the vertex $v$.
A finite transitive permutation group $L$ is {\em graph-restrictive} 
provided there exists a constant $c = c(L)$ such that whenever
$\Gamma$ is a connected $G$-arc-transitive group with $G_v^{\Gamma(v)}$ being permutationally isomorphic to $L$,
the order of the stabiliser $G_v$ is at most $c(L)$. This notion was introduced and studied in~\cite{PSV1} and is relevant in the context of the Weiss conjecture: using this terminology, Weiss conjecture states that every primitive group is graph-restrictive.  

We will call a transitive permutation group $L$  acting on a set $\Omega$
{\em strongly graph-restrictive} if every group $T$ with $L\le T \le\Sym(\Omega)$ is graph-restrictive.
Examples of strongly graph-restrictive permutation groups are provided by certain classes of primitive groups. As the culmination of work of Weiss and Trofimov, every $2$-transitive group is graph-restrictive and, as an overgroup of a $2$-transitive group is still $2$-transitive, we deduce that $2$-transitive groups are strongly graph-restrictive. The proof of Weiss conjecture for $2$-transitive groups is scattered over many papers and hence this result is somewhat
part of folklore, see~\cite[Section~$6$]{PSV1} and the references therein for an overview of the argument. Other examples of strongly graph-restrictive groups are provided by primitive groups with abelian socle, that is, primitive groups of affine type. Recently~\cite{spiga1} it was proved that Weiss conjecture does hold for this class of primitive groups. In many interesting cases, every overgroup of a primitive group of affine type is either affine or $2$-transitive and hence these groups are strongly graph-restrictive. (For instance, most affine groups whose point stabilisers are primitive linear groups satisfy this property.) Now, it would take us too far astray to describe the primitive groups of affine type where each overgroup is either affine or $2$-transitive, thus we simply refer to~\cite{Asch,Asch2} or~\cite{Praeger} for a thorough analysis of the inclusions among primitive groups. Here we simply observe that every primitive group of prime degree is strongly graph-restrictive. In conclusion, if Weiss conjecture proved to be true, then every primitive group is strongly graph-restrictive.

For a strongly graph-restrictive group $L$,
we let $c_*(L)$ denote the maximum of all constants $c(T)$ with $L\le T \le \Sym(\Omega)$.

\begin{theorem}
\label{the:cor}
Let $\Gamma$ be a finite connected $G$-arc-transitive graph of valency at least $3$
such that $G_v^{\Gamma(v)}$ is strongly graph-restrictive.
Then there exists a regular covering projection $\wp \colon \tGa \to \Ga$
with $\tGa$ finite, such that the maximal group that lifts along $\wp$ is $G$ and every automorphism of $\tGa$ projects along $\wp$.
\end{theorem}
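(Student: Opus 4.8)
The plan is to invoke Theorem~\ref{the:main} to secure property~(1), and then to upgrade it to ``(1) and~(2)'' by running Theorem~\ref{the:main} with the prime $p$ chosen very large, so that a rigidity property of $p$-groups forces the group of covering transformations to be normal in $\Aut(\tGa)$. First I would note that the hypotheses force $\Aut(\Gamma)$ to act faithfully on $\H_1(\Gamma;\ZZ)$: since $\Gamma$ is connected and $G$-arc-transitive of valency at least $3$, the group $\Aut(\Gamma)\ge G$ is vertex-transitive of valency at least $3$, so this is Corollary~\ref{cor:3ec} when $\Gamma$ is simple, and the non-simple case is handled by the argument of Lemma~\ref{lemma11}. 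Write $L:=G_v^{\Gamma(v)}$, which is strongly graph-restrictive by hypothesis, so that $c:=c_*(L)$ is a finite constant depending only on the pair $(\Gamma,G)$. Now fix a prime $p$ with $p>(c\,|G|/|G_v|)!$ and apply Theorem~\ref{the:main} to obtain a regular covering projection $\wp\colon\tGa\to\Gamma$ with $\tGa$ finite and connected (the covering graph produced in the proof of Theorem~\ref{the:main} is a quotient of the universal cover, hence connected), whose maximal lifting group is $G$ and whose covering transformation group $N:=\CT(\wp)$ is a $p$-group. Put $T:=\Aut(\tGa)$ and let $\tG$ be the lift of $G$; by Lemma~\ref{lem:maxlift} one has $\tG=\N T N$ and $\tG/N\cong G$, so property~(2) is equivalent to $N\trianglelefteq T$, and this is all that remains.

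The crucial point is that the index $[T:N]$ is bounded by a constant depending only on $(\Gamma,G)$. Since $G$ is arc-transitive on $\Gamma$ and $N\le\tG$ acts transitively on every fibre of $\wp$, the group $\tG$ is arc-transitive on $\tGa$, and hence so is $T\ge\tG$. Moreover $\wp$ restricts to a bijection $\tGa(\tv)\to\Gamma(v)$ which conjugates the action of $\tG_{\tv}$ into that of $G_v$; since $N$ acts regularly on the vertex-fibre over $v$, the projection maps $\tG_{\tv}$ isomorphically onto $G_v$, so $\tG_{\tv}^{\tGa(\tv)}$ is permutationally isomorphic to $L$, whence $L\le T_{\tv}^{\tGa(\tv)}\le\Sym(\tGa(\tv))$. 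As $L$ is strongly graph-restrictive and $\tGa$ is a connected $T$-arc-transitive graph, we get $|T_{\tv}|\le c_*(L)=c$. Because $|\tG_{\tv}|=|G_v|$, vertex-transitivity of $T$ and of $\tG$ gives $[T:\tG]=|T_{\tv}|/|G_v|\le c/|G_v|$, and therefore $[T:N]=[T:\tG]\,[\tG:N]\le (c/|G_v|)\,|G|$.

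To finish, let $N_0$ be the core of $N$ in $T$, that is, the kernel of the action of $T$ on the set of left cosets of $N$; then $N_0\trianglelefteq T$, $N_0\le N$, and $[N:N_0]\le[T:N_0]\le[T:N]!\le (c\,|G|/|G_v|)!$. But $N_0\le N$ and $N$ is a $p$-group, so $[N:N_0]$ is a power of $p$; as $p$ was chosen to exceed $(c\,|G|/|G_v|)!$, this forces $[N:N_0]=1$, that is, $N=N_0\trianglelefteq T$. By Lemma~\ref{lem:maxlift} the maximal group that projects along $\wp$ is then $\N T N=T$, i.e.\ every automorphism of $\tGa$ projects along $\wp$. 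Combined with the fact, supplied by Theorem~\ref{the:main}, that the maximal group lifting along $\wp$ is $G$, this is exactly the assertion of the theorem.

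The conceptual step that I expect to be the main obstacle is recognising that one should feed a \emph{large} prime into Theorem~\ref{the:main} and then play the rigidity of $p$-groups against bounded-index phenomena; the technical heart is the uniform bound on $[\Aut(\tGa):\tG]$, where the hypothesis that $G_v^{\Gamma(v)}$ is \emph{strongly} graph-restrictive is used in an essential way, precisely because $c_*$, being a supremum over all overgroups inside $\Sym(\Gamma(v))$, controls $|T_{\tv}|$ for the a priori unknown group $T=\Aut(\tGa)$. The reduction to faithfulness on $\H_1(\Gamma;\ZZ)$ and the verification that $\tGa$ is $\tG$-arc-transitive with local action permutationally isomorphic to $G_v^{\Gamma(v)}$ are routine but must be carried out with some care.
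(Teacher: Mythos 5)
Your proposal is correct and follows essentially the same route as the paper: choose a prime $p$ exceeding a bound derived from $c_*(G_v^{\Gamma(v)})$ and $|\Gamma|$, apply Theorem~\ref{the:main} to get a $p$-group of covering transformations $N$, use strong graph-restrictiveness to bound $[\Aut(\tGa):N]$ by a quantity smaller than $p$, and deduce $N\trianglelefteq\Aut(\tGa)$. The only (immaterial) difference is the final normality step, where the paper counts Sylow $p$-subgroups via Sylow's theorem while you pass to the normal core of $N$ and use the factorial bound.
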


\begin{proof}
Let $n$ be the order of $\Gamma$ and let $p$ be a prime with $p >nc_*(G_v^{\Gamma(v)})$.
By Corollary~\ref{cor:3ec}, $\Aut(\Gamma)$ acts faithfully on $\H_1(\Ga;\ZZ_p)$, and then by
Theorem~\ref{the:main},  there exists a regular covering projection $\wp \colon \tGa \to \Ga$
with $\tGa$ finite, such that the maximal group that lifts along $\wp$ is $G$ and that the group of covering transformations of $\wp$ is a $p$-group. 

Let $\tilde{A}$ be the automorphism group of $\tGa$, let $\tilde{G}$ be the lift of $G$ along $\wp$ and let $N=\CT(\wp)$. From Lemma~\ref{lem:maxlift}, $\tilde{G}/N\cong G$ and $\tilde{G}=\N{\tilde{A}}{N}$. Let $\tilde{v}$ be a vertex of $\tGa$, let $v=\wp(\tilde{v})$ and set $c=c_*(G_v^{\Ga(v)})$. Since $G_{v}^{\Gamma(v)}\cong \tilde{G}_{\tilde{v}}^{\tGa(\tilde{v})}$ is strongly graph-restrictive and $\tilde{G}_{\tilde{v}}^{\tGa(\tilde{v})}\le \tilde{A}_{\tilde{v}}^{\tGa(\tilde{v})}$, we have $|\tilde{A}_{\tilde{v}}|\le c$. Since $\tilde{G}$ is transitive on the vertices of $\tGa$, we have  $\tilde{A}=\tilde{A}_{\tilde{v}}\tilde{G}$ and hence $[\tilde{A}:\tilde{G}]=[\tilde{A}_{\tilde{v}}:\tilde{G}_{\tilde{v}}]\le |\tilde{A}_{\tilde{v}}|\le c<p$. Moreover, $[\tilde{G}:N]=|G|=n|G_v|\le nc<p$. Therefore, $[\tilde{A}:N]=[\tilde{A}:\tilde{G}][\tilde{G}:N]$ is not divisible by $p$ and hence $N$ is a Sylow $p$-subgroup of $\tilde{A}$. 

By Sylow theorem, the number of Sylow $p$-subgroups of $\tilde{A}$ is $[\tilde{A}:\N{\tilde{A}}{N}]=[\tilde{A}:\tilde{G}]<p$ and is congruent to $1$ modulo $p$. Therefore, $\tilde{A}=\N{\tilde{A}}N$ and hence $\tilde{A}=\tilde{G}$.
\end{proof}

\begin{corollary}
\label{cor:final}
Let $\Gamma$ be a finite connected $G$-arc-transitive graph of valency at least $3$.  If $G$ acts transitively on the $2$-arcs of $\Gamma$ or if
the valency of $\Gamma$ is prime,
then
there exists a regular covering projection $\wp \colon \tGa \to \Ga$
with $\tGa$ finite, such that the maximal group that lifts along $\wp$ is $G$ and every automorphism of $\tGa$ projects along $\wp$. 
\end{corollary}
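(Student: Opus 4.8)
The plan is to deduce Corollary~\ref{cor:final} directly from Theorem~\ref{the:cor} by checking that in each of the two listed cases the hypothesis ``$G_v^{\Gamma(v)}$ is strongly graph-restrictive'' is satisfied. So the whole corollary reduces to two group-theoretic verifications about the transitive permutation group $L := G_v^{\Gamma(v)}$ acting on the neighbourhood $\Gamma(v)$, together with an invocation of the discussion preceding Theorem~\ref{the:cor}.

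First I would handle the $2$-arc-transitive case. If $G$ acts transitively on the $2$-arcs of $\Gamma$, then $G_v$ acts transitively on the ordered pairs of distinct darts at $v$, i.e.\ $L = G_v^{\Gamma(v)}$ is a $2$-transitive permutation group. As recalled in Section~\ref{sec:Cor}, an overgroup of a $2$-transitive group is again $2$-transitive, and by the combined work of Weiss and Trofimov every $2$-transitive group is graph-restrictive; hence every $T$ with $L \le T \le \Sym(\Gamma(v))$ is graph-restrictive, i.e.\ $L$ is strongly graph-restrictive. Theorem~\ref{the:cor} then applies and yields the desired covering projection.

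Next I would handle the prime-valency case. If the valency $d$ of $\Gamma$ is prime, then $L$ is a transitive permutation group of prime degree $d$; moreover, since $\Gamma$ is $G$-arc-transitive, $L$ is nontrivial and transitive, and a transitive group of prime degree is primitive (its only block systems are trivial, as the block size must divide $d$). As observed in the last paragraph of Section~\ref{sec:Cor}, every primitive group of prime degree is strongly graph-restrictive. Again Theorem~\ref{the:cor} applies. In both cases the hypotheses of Theorem~\ref{the:cor} ``finite connected $G$-arc-transitive of valency at least $3$'' are inherited verbatim from the hypotheses of the corollary, so nothing else needs to be checked.

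The only genuine content beyond citing Theorem~\ref{the:cor} is the elementary observation that a transitive permutation group of prime degree is primitive and the identification of $2$-arc-transitivity with $2$-transitivity of $L$; neither is a real obstacle. The ``hard part'', such as it is, lives entirely inside the cited results (the Weiss--Trofimov classification for $2$-transitive groups, and the affine/prime-degree analysis behind strong graph-restrictiveness), all of which are already packaged into Theorem~\ref{the:cor} and the preceding exposition. So I expect the proof to be short: two short paragraphs, one per case, each ending with an appeal to Theorem~\ref{the:cor}.
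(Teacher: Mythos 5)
Your proof is correct and follows exactly the route the paper intends: the corollary is stated without a separate proof precisely because it reduces to checking that $G_v^{\Gamma(v)}$ is strongly graph-restrictive in each case ($2$-transitive local group in the first, transitive hence primitive of prime degree in the second) and then invoking Theorem~\ref{the:cor} together with the observations in Section~\ref{sec:Cor}.
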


We conclude daring to conjecture that the requirement of $G_v^{\Gamma(v)}$ being strongly graph-restrictive in Theorem~\ref{the:cor} is not  necessary.

\begin{conjecture}
\label{conj}
Let $\Gamma$ be a finite connected graph such that the induced action of $\Aut(\Gamma)$
 on $\H_1(\Ga;\ZZ)$ is faithful, and  let $G\le \Aut(\Gamma)$.  
 Then there exists a regular covering projection $\wp \colon \tGa \to \Ga$
with $\tGa$ finite, such that the maximal group that lifts along $\wp$ is $G$ and such that $\Aut(\tGa)$ projects along $\wp$.
\end{conjecture}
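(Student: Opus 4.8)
The plan is to reduce the conjecture to Theorem~\ref{the:main} and then dispose of condition~(2) by the Sylow-theoretic device used in the proof of Theorem~\ref{the:cor}. First I would apply Theorem~\ref{the:main} with a prime $p$, to be fixed later, obtaining a regular covering projection $\wp\colon\tGa\to\Gamma$ with $\tGa$ finite, with maximal lift equal to $G$, and with $N=\CT(\wp)$ a $p$-group. Writing $\tilde{A}=\Aut(\tGa)$ and letting $\tilde{G}$ be the lift of $G$, Lemma~\ref{lem:maxlift} gives $\tilde{G}=\N{\tilde{A}}{N}$; hence condition~(2), namely that $\Aut(\tGa)$ projects along $\wp$, is equivalent to $N\trianglelefteq\tilde{A}$, equivalently to $\tilde{A}=\tilde{G}$. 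Since $[\tilde{G}:N]=|G|$, it would suffice to bound $[\tilde{A}:\tilde{G}]$ by a constant depending only on $\Gamma$ and $G$, and then to take $p$ larger than this constant and than $|G|$: for then $N$ is a Sylow $p$-subgroup of $\tilde{A}$, and the number of its conjugates, being $[\tilde{A}:\tilde{G}]$, is both smaller than $p$ and congruent to $1$ modulo $p$, hence equal to $1$, giving $\tilde{A}=\tilde{G}$.

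Thus everything hinges on a single quantitative point: an upper bound on $[\tilde{A}:\tilde{G}]$ independent of the prime $p$ and of the specific cover returned by Theorem~\ref{the:main}. It is instructive to transcribe this through the universal cover $\mu\colon\Tree\to\Gamma$, exactly as in the proof of Theorem~\ref{the:main}: with $\cN=\CT(\mu)$, $\cH=\N{\Aut(\Tree)}{\cN}$, $\cG$ the lift of $G$, and $\cP\trianglelefteq\cN$ the subgroup supplied by Theorem~\ref{the:tree}, one has $\tGa=\Tree/\cP$ and $\Aut(\tGa)\cong\N{\Aut(\Tree)}{\cP}/\cP$ (since $\Tree$ is a tree, $\wp_\cP$ is universal, so $\Aut(\tGa)$ lifts along it by Lemma~\ref{lem:unilift}), while Theorem~\ref{the:tree} already delivers $\N{\cH}{\cP}=\cG$. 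Hence condition~(2) is equivalent to the inclusion $\N{\Aut(\Tree)}{\cP}\le\cH$: every automorphism of $\Tree$ normalising $\cP$ must already normalise $\cN$. In other words, I would try to choose $\cP$ — with $\cN/\cP$ a $p$-group and $\N{\cH}{\cP}=\cG$ — so rigidly that $\cN$ becomes recoverable from $\cP$ inside $\Aut(\Tree)$.

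I expect this last point to be the main obstacle, and it is precisely where the proof of Theorem~\ref{the:cor} had to invoke strong graph-restrictiveness: for $G$-arc-transitive $\Gamma$ with $G_v^{\Gamma(v)}$ strongly graph-restrictive, the vertex-stabiliser $\tilde{A}_{\tilde{v}}$ has order at most $c_*(G_v^{\Gamma(v)})$, a constant independent of the cover, and since $\tilde{G}$ is already vertex-transitive this bounds $[\tilde{A}:\tilde{G}]$; no such control is available in general. To remove the hypothesis I would attempt to exploit the freedom still present in the proof of Theorem~\ref{the:tree}: the regular $\ZZ_p\Sigma$-module $R$ inside some layer $V_i$ can be enlarged (for instance to several disjoint copies of the regular module) and the index $i$ can be taken larger, and I would try to use this to force the setwise stabiliser in $\Aut(\tGa)$ of a fibre of $\wp$ to be trivial — which would yield $\Aut(\tGa)=\tilde{G}$ directly, without any appeal to Sylow's theorem. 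An alternative, more structural line would be to realise $\cP$ as the kernel of an $\cH$-module map onto a permutation module carrying enough rigidity that its stabiliser in $\Aut(\Tree)$ cannot escape $\cH$. Either route, however, seems to require a genuinely new idea for controlling the automorphism group of a finite cover in the absence of a local-group hypothesis, which is why the statement is offered here only as a conjecture.
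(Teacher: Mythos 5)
You have not proved the statement, and neither does the paper: this is Conjecture~\ref{conj}, offered by the authors precisely because they could not remove the strong graph-restrictiveness hypothesis from Theorem~\ref{the:cor}. So there is no proof in the paper to compare against, and your proposal — as you yourself acknowledge in its final sentence — is an analysis of the obstruction rather than an argument that overcomes it. That said, your reduction is correct and faithfully reconstructs the authors' own strategy: applying Theorem~\ref{the:main} with a large prime $p$, using Lemma~\ref{lem:maxlift} to identify $\tG$ with $\N{\tilde{A}}{N}$, and observing that condition~(2) follows from Sylow's theorem once $[\tilde{A}:\tG]$ and $[\tG:N]=|G|$ are both bounded above by a constant independent of $p$. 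Your translation to the universal cover, where condition~(2) becomes the inclusion $\N{\Aut(\Tree)}{\cP}\le\cH$, is also accurate and is a useful way to see what rigidity is being asked of $\cP$.

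The genuine gap is exactly the one you name: in the absence of a local hypothesis there is no known bound on $[\tilde{A}:\tG]$, equivalently on the vertex-stabilisers of $\Aut(\tGa)$, that is uniform over the covers produced by Theorem~\ref{the:tree}. Your two suggested remedies do not close it. Enlarging the regular module $R$ or increasing the layer index $i$ in the proof of Theorem~\ref{the:tree} only refines the structure of $\cN/\cP$ as a module for $\cH/\cN$ and its overgroup $\Sigma=\Aut(V_1)$; it gives no leverage over automorphisms of $\Tree/\cP$ that do not normalise $\cN$, i.e.\ that do not arise from $\Sigma$ at all, and these are precisely the ones that could inflate $\tilde{A}_{\tilde{v}}$. (This is the same difficulty that makes the Weiss conjecture hard: controlling a vertex-stabiliser from purely local data.) Likewise, realising $\cP$ as the kernel of an $\cH$-module map cannot by itself force $\N{\Aut(\Tree)}{\cP}\le\cH$, since the normaliser of $\cP$ in $\Aut(\Tree)$ is not a priori constrained by any $\cH$-equivariant construction. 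Your proposal should therefore be read as a correct identification of why the statement remains open, not as progress towards a proof.
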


\smallskip

{\bf Acknowledgements.} The  first author gratefully acknowledges financial support of the Slovenian Research Agency, ARRS, research program no.\ P1-0294.

\end{document}